\newtheorem{lemma}{Lemma}[section]
\newtheorem{proposition}{Proposition}[section]
\newenvironment{proof}[1][Proof]{\textbf{#1.} }{\ \rule{0.5em}{0.5em}}
\author{Yves Le Jan}
\title{Markov loops, coverings and fields } 
\begin{document}
\maketitle

\footnotetext{ Key words and phrases: Free field, Markov processes, 'Loop soups', Eulerian circuits, homology}
\footnotetext{  AMS 2000 subject classification:  60K99, 60J55, 60G60.}
\begin{abstract}
We investigate the relations between the Poissonnian loop ensembles, their occupation fields, non ramified Galois coverings of a graph, the associated gauge fields, and random Eulerian networks.
\end{abstract}

\section{Introduction}
Relations between occupation fields of Markov processes and Gaussian processes have been the object of many investigations since the seminal work of Symanzik \cite{Symanz} in which Poisson ensembles of Brownian loops were implicitly used. Since the work of Lawler and Werner \cite{LW} on "loop soups", these ensembles have also been the object of many investigations. Their properties can be studied in the context of rather general Markov processes. The purpose of the present work is to explore new directions in this context, in particular the relation with gauge fields.
\section{Discrete Topology}
\subsection{Graphs and fundamental groups }
 In this first section, we will briefly present the topological background of our study.


Our basic object will be a graph $\mathcal{G}$, i.e. a set of vertices $X$ together with a set of non oriented edges $E$. \textsl{We assume it is connected, and that there is no loop-edges nor multiple edges (though this is not really necessary)}. The set of oriented edges
is denoted $E^{o}$. It will always be viewed as a subset of $X^{2}$, without
reference to any imbedding.
 An oriented edge $(x,y)$\ is defined by the choice of an
ordering in an edge$.$ We set $-(x,y)=(y,x)$ and if $e=(x,y)$, we denote it
also $(e^{-},e^{+})$. The degree $d_{x}$ of a vertex $x$ is by definition the
number of non oriented edges incident at $x$.

\index{geodesic}
A $n$-tuple of elements of $X$, say $(x_{0},x_{1},...,x_{n})$ is called a path on $X$ 
  iff $\{x_{i},x_{i+1}\}\in E$ (path segment on
the graph) for all $i$ and  a \emph{geodesic arc} if moreover $x_{i-1}\neq x_{i+1}$ (no backtracking). Geodesic arcs
starting at $x_{0}$ form a \emph{marked tree} $\mathfrak{T}_{x_{0}}$ rooted in
$x_{0}$ (if we identify $x_{0}$ with the path $(x_{0})$. The marks belong to $X$: they are the endpoints of the geodesic
arcs, thus we have a canonical projection $p$ from $\mathfrak{T}_{x_{0}}$ onto $X$.  Oriented edges of $\mathfrak{T}_{x_{0}}$ are defined by pairs of
geodesic arcs of the form:
\index{universal covering}
$((x_{0},x_{1},...,x_{n}),(x_{0},x_{1},...,x_{n},x_{n+1}))$ (the orientation
is defined in reference to the root). $\mathfrak{T}_{x_{0}}$ is a
\emph{universal covering} of $X$ \cite{Mass}.

A (discrete) loop based at $x_{0}\in X$ is by definition a path $\xi=(\xi
_{1},...,\xi_{p(\xi)})$, with $\xi_{1}=x_{0}$, and $\{\xi_{i},\xi_{i+1}\}\in
E$, for all $1\leq i\leq p$ with the convention that $\xi_{p+1}=\xi_{1}$. On the
space of geodesic loops based at some point $x_{0}$,
we can define an operation (by  concatenation and cancellation of two inverse subarcs) which yields a group structure (the neutral element is the empty loop) $\Gamma_{x_{0}}$.  Note that the
fiber of the universal covering $\mathfrak{T}_{x_{0}}$ at $x_{0}$ is $\Gamma_{x_{0}}$.

There is a natural left action of $\Gamma_{x_{0}}$\ on $\mathfrak{T}_{x_{0}}$.
It can be interpreted as a change of root in the tree (the new root having the same mark $x_0$). Note that $X=\Gamma_{x_{0}%
}\backslash\mathfrak{T}_{x_{0}}$ (here we use of the quotient on the left
corresponding to the left action).
Besides, any geodesic arc between $x_{0}$ and another point $y_{0}$ of $X$
defines an isomorphism between $\mathfrak{T}_{x_{0}}$ and $\mathfrak{T}%
_{y_{0}}$ (change of root, with different root marks) .


 The groups $\Gamma_{x_{0}},x_{0}\in
X$\ are conjugated in a non canonical way.
The structure of
$\Gamma_{x_{0}}$ does not depend on the base point and this isomorphism class defines the
\index{fundamental group}\emph{fundamental group} $\Gamma$ of the graph (as the graph is connected: see for example \cite{Mass}).

\bigskip

A \emph{spanning tree} $T$\ is by definition a subgraph of $\mathcal{G}$ which is a
tree and covers all points in $X$.
\index{spanning tree} 
It has necessarily $\left|  X\right|
-1$\ edges.\\
 The inverse images of a spanning tree by the canonical projection from a
universal cover $\mathfrak{T}_{x_{0}}$\ onto $X$ form a tesselation on
$\mathfrak{T}_{x_{0}}$, i.e. a partition of $\mathfrak{T}_{x_{0}}$\ in
identical subtrees, which are fundamental domains for the action of
$\Gamma_{x_{0}}$. Conversely, a section of the canonical projection from the
universal cover with connected image defines a spanning tree.

Fixing a spanning tree determines a unique geodesic between two points of $X$.
Therefore, it determines\ the conjugation isomorphisms between the various
groups $\Gamma_{x_{0}}$.
%

\emph{The fundamental group }$\Gamma$\emph{ is a free group} with $\left|
E\right|  -\left|  X\right|  +1=r$ generators. To construct a set of
generators, one considers a spanning tree $T$\ of the graph, and choose an
orientation on each of the $r$ remaining links. This defines $r$ oriented
cycles on the graph and a system of $r$ generators for the fundamental group.
(See \cite{Mass} or Serre (\cite{Ser}) in a more general context).

\index{reduced path}Given any finite path $\omega$ with starting point $x_{0}$, the reduced path
$\omega^{R}$ is defined as the geodesic arc defined by the endpoint of the
lift of $\omega$ to $\mathfrak{T}_{x_{0}}$.

Tree-contour-like based loops can be defined as discrete based loops whose
lift to the universal covering are still based loops. Each link is followed
the same number of times in opposite directions (backtracking). The reduced
path $\omega^{R}$\ can equivalently be obtained by removing all
tree-contour-like based loops imbedded into it. In particular each loop $l$
based at $x_{0}$ defines an element $l^{R}$ in $\Gamma_{x_{0}}$.%

\subsection{Geodesic loops and conjugacy classes}
 Loops are defined as equivalence classes of based loops under the
natural shift $\theta$ defined by $\theta\xi=(\xi
_{2},...,\xi_{p(\xi)},\xi_{p(\xi)+1}=\xi_1)$, with  $\xi=(\xi
_{1},...,\xi_{p(\xi)})$.

Geodesic loops are of particular interest as they are in bijection with the set of conjugacy classes
of the fundamental group. Indeed, if we fix a reference point $x_{0}$, a geodesic
loop defines the conjugation class formed of the elements of $\Gamma_{x_{0}}$
obtained by choosing a base point on the loop and a geodesic segment linking
it to $x_{0}$. Any non trivial element of $\Gamma_{x_{0}}$ can be obtained in
this way.


Given a loop, there is a canonical geodesic loop associated with it. It is
obtained  by removing recursively all tail edges (i.e. pairs of consecutive inverse oriented edges of the
loop) .

\subsection{ Galois Coverings and Monodromy}
There are various non-ramified coverings, intermediate between $\mathcal{G}=(X,E)$ and the
universal covering. Non ramified means that locally, the covering space is
identical to the graph (same incident edges). 
More precisely, a graph $\widetilde{\mathcal{G}}=(\widetilde{X},\widetilde{E})$ is an non-ramified covering of $\mathcal{G}$ if there exist a map $p$ from $\widetilde{X}$ onto $X$ such  that, for every
vertex $u$ in $\widetilde{X}$, the projection $p$ restricts to a bijection from the set of neighbors of $u$ to
the set of neighbors of $p(u)$. We will consider only non-ramified coverings.

Then each oriented path segment
on $X$ can be lifted to the covering in a unique way, given a lift of its
starting point.

Each covering is (up to an isomorphism) associated with a
subgroup $\widetilde{\Gamma}$\ of the fundamental group $\Gamma$, defined up to conjugation. More precisely, given a covering $\widetilde{\mathcal{G}}$, a point $x_{0}$ of $X$ and a point
$\widetilde{x}_{0}$ in the fiber above $x_{0}$, the closed geodesics based at
$x_{0}$ whose lift to the covering\ starting at $\widetilde{x}_{0}$\ are
still closed form a subgroup $\widetilde{\Gamma}_{\widetilde{x}_{0}}$\ of $\Gamma_{x_{0}}$,
canonicaly isomorphic to the fundamental group of $\widetilde{\mathcal{G}}$ represented
by closed geodesics based at $\widetilde{x_{0}}$.

Conversely, if $\widetilde{\Gamma}_{x_{0}}$ is a subgroup of $\Gamma_{x_{0}}$, the covering is defined
as the quotient graph $(Y,F)$ with $Y=\widetilde{\Gamma}_{x_{0}}\backslash\mathfrak{T}_{x_{0}}$ and $F$
the set of edges defined by the canonical projection from $\mathfrak{T}%
_{x_{0}}$ onto $Y$.

If $\widetilde{\Gamma}_{x_{0}}$ is a normal subgroup, the quotient group (called the covering or the monodromy
group) $M_{x_{0}}=\widetilde{\Gamma}_{x_{0}}\backslash\Gamma_{x_{0}}$ acts faithfully on\ the fiber at $x_{0}$. We say the covering is a Galois (or normal) covering.\\
An example is the commutator subgroup $[\Gamma_{x_{0}},\Gamma_{x_{0}}]$. The
associate covering is the maximal Abelian covering at $x_{0}$.  The monodromy group is the first homology group $H_1(\mathcal{G},\mathbb{Z})$ of the graph. It is an Abelian group with $n=\vert E \vert -\vert X \vert +1$ generators.\\
Another example is the cube, which, by central symmetry, is a twofold covering of the tetrahedron
associated with the group $\mathbb{Z}/2\mathbb{Z}$.\\
Monodromy groups associated with different base points are then isomorphic. Any of them will be denoted $M$.\\
 Every based loop in $\mathcal{G}$ defines an element of $\Gamma_{x_{0}}$ and an element of the monodromy group $M_{x_{0}}$ at the base point whose conjugacy class is independent of the geodesic linking $X_0$ to the base point and invariant under a change of base point. It is unchanged if we erase all tail edges so that any conjugacy class of $\Gamma_{x_{0}}$, i.e. any geodesic loop $C$ determines a conjugacy class of $M$.

Each spanning tree of $\mathcal{G}$ determines a tesselation of $\widetilde{X}$, isomorphisms between the fibers of the covering, between different groups $\Gamma_{x_{0}}$ as $x_0$ varies in $X$, which induce isomorphisms between the groups $\widetilde{\Gamma}_{x_{0}}$ and the quotient groups $M_{x_{0}}$ (which are represented by the fibers). It follows that there is an action of $M$ on $ \widetilde{X}$ which preserves the tesselation such that $X = M\setminus \widetilde{X} $.\\

Given a finite group $M$, we can create a Galois covering of $\mathcal{G}$ by assigning to each oriented edge $(x,y)$ an element of $M$,  $U_{(x,y)}$ in such a way that opposite edges correspond to inverse elements.  It is associated with the subgroup of $\Gamma_{x_0}$ formed by geodesic loops based at $x_0$ such that the ordered product of the $U_{(x,y)}$ assigned to the edges of the loop is equal to the identity.

If we fix a base point, the monodromies of the loops based at form a subgroup $M'_{x_0}$ of $M$, and these subgroups are isomorphic by conjugacy if we change the base point. We can therefore reduce our attention to the case $M'=M$. The vertex set of the covering is then represented by $X\times M$. 

  Note that if we attach an element $m_x$ of $M$ to each vertex and replace $U_{(x,y)}$ by $m_x U_{(x,y)} m_y ^{-1}$, the covering is unchanged. In particular, if we choose a spanning tree of $\mathcal{G}$, the covering can determined by assigning to the edges of the spanning tree the identity and to the other edges the monodromies of the loops they determine. Fixing such a $M$-assignment can be also expressed as fixing a gauge field. The gauge group $M^X$ acts faithfully  by conjugacy on these assignments and $M$-coverings are the orbits of this action.
  
  \smallskip 
   
   Note that $M$-assignments are the counterpart, in discrete geometry, of the $\mathfrak{g}$-valued differential forms defining a connection on a $G$-principal bundle.

 Given a $M$- assignment $U$, we define the conjugacy class in $M$ of a loop $l$, denoted  $C_U(l)$ as the image of conjugacy class of any its representatives in $\Gamma$ by the canonical projection.  This conjugacy class depends only on the covering defined by $U$ and on the geodesic loop defined by $l$. It is the conjugacy class of the product of the elements of $M$ attached to the edges of the loop. 

\section{Markov loops}
\subsection{The loop ensemble and the free field}

We adopt the framework described in \cite{stfl}. Given a graph $\mathcal{G}=(X,E)$, a set of non negative conductances $C_{x,y}=C_{y,x}$ indexed by the set of edges $E$ and a non negative killing measure $\kappa$ on the set of vertices $X$, we can associate to them an energy (or Dirichlet form) $\mathcal{E}$, we will assume to be positive definite, which is a transience assumption. For any function $f$ on $X$, we have: $$\mathcal{E}(f,f)=\frac{1}{2}\sum_{x,y}C_{x,y}(f(x)-f(y))^{2}+\sum_x \kappa_x f(x)^{2}.$$
There is a duality measure $\lambda$ defined by $\lambda_x=\sum_y C_{x,y} +\kappa_x$.
Let $G_{x,y}$ be the symmetric Green's function associated with $\mathcal{E}$.\\
The  associated symmetric continuous time Markov process can be obtained from the Markov chain defined by the transition matrix $P_{x,y}=\frac{C_{x,y}}{\lambda_y}$ by adding independent exponential holding times of mean $1$ before each jump. If $P$ is submarkovian, the chain is absorbed at a cemetery point $\Delta$. If $X$ is finite, the transition matrix is necessarily submarkovian.\\
The complex (respectively real) free field is the complex (real) Gaussian field on $X$ whose covariance function is $G$. We will denote it by $\varphi$ (respectively $\varphi ^{\mathbb{R}}$).\\
We denote by $\mu$ the loop measure associated with this symmetric Markov process. It can also be viewed as a shift invariant measure on based loops. We can refer to \cite{stfl} for the general definition in terms of Markovian bridges, but let us mention that:\\
- the measure of a non-trivial discrete loop is the product of the transition probabilities of its edges if it is aperiodic; otherwise this product should be divided by the multiplicity of the loop.\\
- the measure on continuous time loops is then obtained by including exponential holding times, except for one point loops on which the holding time measure (which has infinite mass)  has density $\frac{e^{-t}}{t}$.

 The Poissonian loop ensemble $\mathcal{L_{\alpha}}$ is the Poisson process of loops of intensity $\alpha\mu$. It can be constructed in such a way that the the set of loops  $\mathcal{L}_\alpha$ increases with $\alpha$.\\ We set $\mathcal{L}=\mathcal{L}_1$ Recall that when $\mathcal{G}$ is finite, $\mathcal{L}$ can be sampled by Wilson algorithm (Cf: \cite{stfl}, \cite{chang}).\\

\subsection{Occupation fields }
 We denote by $\hat{\mathcal{L}}_{\alpha}$ the occupation field associated with $\mathcal{L}_\alpha$ i.e. the total time spent in $x$ by the loops of $\mathcal{L}_\alpha$, normalized by $\lambda_x$.  
It has been shown in $\cite{aop} $ (see also $\cite{stfl}$) that the fields $\hat{\mathcal{L}}=\hat{\mathcal{L}_1}$ ($\hat{\mathcal{L}}_{\frac{1}{2}}$) and $\frac{1}{2} \varphi^2$ ($\frac{1}{2} (\varphi^{\mathbb{R}})^2$) have the same distribution. Note that this property extends naturally to symmetric Markov processes in which points are non-polar and in particular to one dimensional diffusions (see \cite{Lupdif}). Generalisations to dimensions 2 and 3 involve renormalization (Cf \cite{stfl}).

   Note that a natural coupling of the free field with the occupation field of the loop ensemble of intensity $\frac{1}{2}\mu$ has been recently given by T. Lupu \cite{Lup}, using loop clusters.\\

   In what follows, we will assume for simplicity that $\mathcal{G}$ is finite. We will now define the edge occupation fields associated with the loop ensembles.\\
Given any oriented edge $(x,y)$ of the graph, denote by $N_{x,y}(l)$ the total number of jumps made from $x$ to $y$ by the loop $l$ and by $N^{(\alpha)}_{x,y}$ the total number of jumps made from $x$ to $y$ by the loops of $\mathcal{L}_{\alpha}$. Note that $N^{(\alpha)}_{x,x}=0$.
\\ Let $Z$ be any Hermitian matrix indexed by pairs of vertices and $\chi$ a non-negative measure on $X$.\\
The content of the following lemma appeared already in chapter 5 and 6 of \cite{stfl} (see remarks 11 and 13 for ii) and iii)).
\begin{lemma}\label{toto} 
Denote by $P^Z_{x,y}$ the matrix $P_{x,y}Z_{x,y}$.\\
\begin{enumerate}
\item[i)]  We have: $$E(\prod_{x\neq y} Z_{x,y}^{N^{(\alpha)}_{x,y}}e^{-\sum_x \chi_x \hat{\mathcal{L}}_{\alpha}^x} )=\left[\frac{\det(I-\frac{\lambda}{\lambda+\chi}P^{Z})}{\det(I-P)}\right]^{-\alpha}.$$
\item[ii)] For $\alpha=1$,  $$E(\prod_{x\neq y} Z_{x,y}^{N^{(1)}_{x,y}}e^{-\sum_x \chi_x \hat{\mathcal{L}}_{1}^x} ) =E(e^{\sum_{x\neq y}(\frac{1}{2} C_{x,y} (Z_{x,y}-1)\varphi_x \bar{\varphi}_y)}e^{-\frac{1}{2}\sum_x \chi_x\varphi_x\bar{\varphi}_x} ).$$
\item[iii)] For $\alpha=\frac{1}{2}$,  $$E(\prod_{x\neq y} Z_{x,y}^{N^{(\frac{1}{2})}_{x,y}}e^{-\sum_x \chi_x \hat{\mathcal{L}}_{\frac{1}{2}}^x} ) =E(e^{\sum_{x\neq y}\frac{1}{2} C_{x,y} (Z_{x,y}-1)\varphi ^{\mathbb{R}}_x\varphi ^{\mathbb{R}}_y}e^{-\frac{1}{2} \sum_{x}\chi_x (\varphi ^{\mathbb{R}}_x)^2} ).$$
\end{enumerate}
\end{lemma}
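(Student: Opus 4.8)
The plan is to reduce all three identities to a single computation against the loop measure $\mu$ via the exponential formula for Poisson ensembles, and then to evaluate that computation by a Feynman--Kac / trace--log argument; parts (ii) and (iii) will follow by recognising the right-hand sides as Gaussian integrals. First I would exploit additivity over the loops of $\mathcal{L}_\alpha$: since $N^{(\alpha)}_{x,y}=\sum_{l\in\mathcal{L}_\alpha}N_{x,y}(l)$ and $\hat{\mathcal{L}}_\alpha^x=\sum_{l\in\mathcal{L}_\alpha}\hat{l}^x$, the variable under the expectation factorises as $\prod_{l\in\mathcal{L}_\alpha}F(l)$ with $F(l)=\prod_{x\neq y}Z_{x,y}^{N_{x,y}(l)}e^{-\sum_x\chi_x\hat{l}^x}$. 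As $\mathcal{L}_\alpha$ is a Poisson process of intensity $\alpha\mu$, the exponential formula gives $E\big(\prod_{l}F(l)\big)=\exp\big(\alpha\int(F(l)-1)\,\mu(dl)\big)$, so that (i) reduces to computing $\int(F-1)\,d\mu$ as a difference of log-determinants.

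To evaluate that integral I would read $F$ as a multiplicative (Feynman--Kac) weight on the trajectory: each jump $x\to y$ is multiplied by $Z_{x,y}$, turning $P$ into $P^Z$, while the occupation time at $x$ (normalised by $\lambda_x$) is discounted at rate $\chi_x/\lambda_x$. Writing $\mu$ in its defining form $\int_0^\infty\frac1t(\cdots)\,dt$, the weighted trajectories are governed by the subprobability semigroup $e^{t\tilde L}$ with $\tilde L=P^Z-I-\mathrm{diag}(\chi/\lambda)$, whence $\int(F-1)\,d\mu=\int_0^\infty\frac1t\,\mathrm{tr}\big(e^{t\tilde L}-e^{t(P-I)}\big)\,dt$. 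Applying the scalar identity $\int_0^\infty\frac1t(e^{-at}-e^{-bt})\,dt=\log(b/a)$ eigenvalue by eigenvalue (a matrix Frullani / trace--log identity) turns this into $\log\det(I-P)-\log\det(-\tilde L)$. Finally $-\tilde L=\mathrm{diag}(\tfrac{\lambda+\chi}{\lambda})-P^Z=\mathrm{diag}(\tfrac{\lambda+\chi}{\lambda})\big(I-\tfrac{\lambda}{\lambda+\chi}P^Z\big)$, which is the matrix appearing in (i) up to the diagonal normalisation; exponentiating by $\alpha$ gives the stated ratio.

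I expect the delicate point to be precisely this step. The one-point loops make $\int_0^\infty\frac1t\,\mathrm{tr}(e^{t\tilde L})\,dt$ individually divergent at $t=0$, and one must check that the difference form converges (their net contribution being the finite Frullani value $\sum_x\log\frac{\lambda_x}{\lambda_x+\chi_x}$) and that the based-versus-unbased multiplicity is correctly absorbed by the trace. Positive-definiteness of $\mathcal{E}$ together with the Hermitian hypothesis on $Z$ keeps the spectra of $\tilde L$ and $P-I$ in the open left half-plane, so all determinants are nonzero and the eigenvalue expansion of the trace--log is legitimate; this is where transience (spectral radius of $P$ below $1$) is used.

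For (ii) and (iii) I would specialise $\alpha=1$ and $\alpha=\tfrac12$ and compute the Gaussian right-hand sides directly. Their exponents are quadratic in the field, so the expectations are ratios of Gaussian normalisations, hence ratios of determinants. Using $G^{-1}=V-C$ with $V=\mathrm{diag}(\lambda)$ and $C$ the conductance matrix, the term $\tfrac12 C_{x,y}(Z_{x,y}-1)$ replaces $C$ by $C^Z$ (entries $C_{x,y}Z_{x,y}$), i.e. replaces $P$ by $P^Z$, while the $\chi$-term shifts the diagonal by $\mathrm{diag}(\chi)$; the resulting determinant ratio is exactly the $\alpha=1$ (resp. $\alpha=\tfrac12$) case of (i). Here one uses the complex Gaussian integral $E(e^{-\varphi^{*}B\varphi})=\det(I+GB)^{-1}$ for (ii), and the real Gaussian integral, which produces the factor $\tfrac12$ in the exponent and a square root matching $\alpha=\tfrac12$, for (iii).
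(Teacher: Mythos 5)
The paper does not actually prove this lemma --- it points to chapters 5 and 6 of \cite{stfl} --- so there is no in-paper argument to compare with. Your route (exponential formula for the Poisson ensemble, Feynman--Kac perturbation of the generator $L=P-I$, Frullani/trace--log identity for (i), Gaussian determinant computations for (ii) and (iii)) is the standard one and is essentially the proof in the cited source; the reduction to $\alpha\int(F-1)\,d\mu$, the identification $\tilde L=P^Z-I-\mathrm{diag}(\chi/\lambda)$, and the convergence discussion at $t=0$ and $t=\infty$ are all correct.

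The one point you must not wave away is precisely the ``diagonal normalisation'' you mention in passing. Carried to the end, your own computation gives
$$\int(F-1)\,d\mu=\log\det(I-P)-\log\det(-\tilde L),\qquad \det(-\tilde L)=\prod_x\frac{\lambda_x+\chi_x}{\lambda_x}\,\det\Bigl(I-\tfrac{\lambda}{\lambda+\chi}P^Z\Bigr),$$
so the expectation equals $\bigl[\prod_x\tfrac{\lambda_x}{\lambda_x+\chi_x}\bigr]^{\alpha}\bigl[\det(I-\tfrac{\lambda}{\lambda+\chi}P^Z)/\det(I-P)\bigr]^{-\alpha}$: the extra factor is exactly the one-point-loop contribution $\sum_x\log\tfrac{\lambda_x}{\lambda_x+\chi_x}$ that you correctly identified, and it does not cancel. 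This is not a flaw in your method but a genuine discrepancy with the formula displayed in (i); to see that the factor is really there, take a single vertex with no edges, so that $P=0$ and $\mathcal{L}_\alpha$ consists only of one-point loops: then $E(e^{-\chi_x\hat{\mathcal{L}}_\alpha^x})=(\lambda_x/(\lambda_x+\chi_x))^{\alpha}$ while the right-hand side of (i) is $1$. Likewise (i) with $\alpha=1$, $Z=1$ disagrees with (ii) by the same factor. The clean statement you actually prove is $\bigl[\det(M_{\lambda+\chi}-C^Z)/\det(M_\lambda-C)\bigr]^{-\alpha}$, with $M_\rho$ the diagonal matrix of $\rho$ and $C^Z_{x,y}=C_{x,y}Z_{x,y}$; this is your $\det(-\tilde L)/\det(-L)$ and it is the version that matches the Gaussian computations in (ii) and (iii) exactly. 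You should either record the identity in that form or explicitly flag the missing factor, rather than asserting that the stated ratio comes out. A minor further point: in (ii) you need the convention $E(\varphi_x\bar\varphi_y)=2G_{x,y}$ (independent real and imaginary parts, each of covariance $G$); with $E(\varphi\bar\varphi^{T})=G$ your identity $E(e^{-\varphi^{*}B\varphi})=\det(I+GB)^{-1}$ leaves a stray factor $\tfrac12$ in the exponent.
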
 

\subsection{Eulerian networks}
We define a network to be a  $\mathbb{N}$-valued function defined on oriented edges of the graph. It is given by a matrix $k$ with  $\mathbb{N}$-valued coefficients which vanishes on the diagonal and on entries $(x,y)$ such that $\{x,y\}$ is not an edge of the graph. We say that $k$ is Eulerian if $$ \sum_y k_{x,y}= \sum_y k_{y,x}.$$ For any Eulerian network $k$, we define $k_x$ to be $\sum_y k_{x,y}=\sum_y k_{y,x}$.
It is obvious that the field $N^{(\alpha)}$ defines a random network which verifies the Eulerian property.\\

The distribution of the random network defined by $\mathcal{L}_{\alpha}$  was given in \cite{lejanito}. The cases $\alpha=1$ is of special interest:
\begin{proposition}
i) For any Eulerian network $k$,$$  P(N^{(1)}=k)=\det(I-P)\frac{\prod_x {k_x}!}{\prod_{x,y} k_{x,y}!} \prod_{x,y} P_{x,y}^{k_{x,y}}.$$
ii) For any Eulerian network $k$, and any nonnegative function $\rho$ on $X$  $$P(N^{(1)}=k\:,\: \hat{\mathcal{L}}_{1}\in (\rho, \rho+d\rho )=\frac{1}{\det(G)}\prod_{x,y} \frac{(\sqrt{\rho_x} C_{x,y}\sqrt{\rho_y})^{k_{x,y}}}{ k_{x,y}!}\prod_{x}\frac{1}{2}e^{-\frac{1}{2}\lambda_x\rho_x }d\rho_x.$$
\end{proposition}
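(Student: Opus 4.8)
The plan is to read both identities off the generating functions of Lemma~\ref{toto} by extracting the coefficient of the monomial $\prod_{x\neq y}Z_{x,y}^{k_{x,y}}$. For i) I would specialize Lemma~\ref{toto} i) to $\chi=0$, $\alpha=1$, obtaining the joint generating function of the edge counts, $E\big(\prod_{x\neq y}Z_{x,y}^{N^{(1)}_{x,y}}\big)=\det(I-P)/\det(I-P^{Z})$. Since $N^{(1)}$ is supported on Eulerian networks and is a.s.\ finite (the graph being finite and transient), $P(N^{(1)}=k)$ is exactly the coefficient of $\prod_{x,y}Z_{x,y}^{k_{x,y}}$ in this rational function; as $\det(I-P)$ is free of $Z$, everything reduces to expanding $\det(I-P^{Z})^{-1}$ as a power series in the entries of $P^{Z}$.

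The heart of i) is therefore the algebraic identity
\[
\frac{1}{\det(I-W)}=\sum_{k\ \mathrm{Eulerian}}\frac{\prod_x k_x!}{\prod_{x,y}k_{x,y}!}\prod_{x,y}W_{x,y}^{k_{x,y}},\qquad W_{x,y}=P_{x,y}Z_{x,y},
\]
the coefficient vanishing on non-Eulerian $k$. I would prove this by the MacMahon master theorem: with $T=\mathrm{diag}(t_x)$, the coefficient of $\prod_x t_x^{k_x}$ in $\det(I-TW)^{-1}$ equals the coefficient of $\prod_y s_y^{k_y}$ in $\prod_x\big(\sum_y W_{x,y}s_y\big)^{k_x}$; expanding each factor by the multinomial theorem, the monomial $\prod_{x,y}W_{x,y}^{k_{x,y}}$ survives precisely when its row sums $\sum_y k_{x,y}$ (tracked by $t_x$) and its column sums $\sum_x k_{x,y}$ (tracked by $s_x$) both equal $k_x$ — that is, exactly when $k$ is Eulerian — with multiplicity $\prod_x k_x!/\prod_{x,y}k_{x,y}!$. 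Equivalently one expands $-\log\det(I-W)=\sum_n\frac1n\mathrm{tr}(W^n)$ as a sum over based loops grouped by cyclic shift, recognises $\frac1{\mathrm{mult}}\prod W$ as the discrete loop measure, and identifies the multinomial as the number of closed-walk decompositions of $k$. This combinatorial step is the main obstacle of i): matching the $1/\mathrm{mult}$ and $1/n_\ell!$ weights of the exponential expansion against $\prod_x k_x!/\prod_{x,y}k_{x,y}!$ is exactly a transition-system (BEST-type) count.

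For ii) I would use Lemma~\ref{toto} ii), the $\alpha=1$ free-field identity, and compute the joint law on the Gaussian side. Writing the complex field in polar form $\varphi_x=\sqrt{2\rho_x}\,e^{i\theta_x}$, so that the modulus carries the occupation $\hat{\mathcal L}_1=\tfrac12|\varphi|^2$ and the phases are auxiliary, the edge interaction $\exp\big(\sum_{x\neq y}\tfrac12 C_{x,y}(Z_{x,y}-1)\varphi_x\bar\varphi_y\big)$ combines with the off-diagonal part of the Gaussian weight into a sum whose effective edge weight is proportional to $C_{x,y}Z_{x,y}\sqrt{\rho_x\rho_y}\,e^{i(\theta_y-\theta_x)}$. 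Expanding this exponential and integrating the phases $\theta_x$ over $[0,2\pi)$ annihilates every term except those balancing incoming and outgoing factors at each vertex — the Eulerian constraint again — and produces precisely the factor $\prod_{x,y}(C_{x,y}\sqrt{\rho_x\rho_y})^{k_{x,y}}/k_{x,y}!$ after one reads off the coefficient of $\prod Z^{k}$. Collecting the diagonal weight $e^{-\frac12\sum_x\lambda_x\rho_x}$ and the normalisation $1/\det G$ then yields the stated density, the exponent of each $\rho_x$ summing to $k_x$ via $\sum_y k_{x,y}=\sum_y k_{y,x}=k_x$.

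One may instead derive ii) from i) by multiplying $P(N^{(1)}=k)$ by the conditional law of $\hat{\mathcal L}_1$ given $N^{(1)}=k$: conditionally the occupations at distinct vertices are independent, and at $x$ the occupation is the sum of the holding times of the $k_x$ visits of the jumping loops and of the one-point-loop contribution at $x$ (whose $\tfrac{e^{-t}}{t}$ intensity integrates, by Frullani, to a further Gamma factor), giving a Gamma law whose shape is governed by $k_x$; the simplification $\det(I-P)\prod_x\lambda_x=\det(\Lambda-C)=1/\det G$ produces the prefactor. In either route the delicate point — and where I expect to lose the most time — is the exact bookkeeping of the factors of $\tfrac12$, the holding-time rate, and the one-point loops, which is what fixes the rate $\tfrac12\lambda_x$ in the exponential and the overall constant; the structural form follows cleanly, but pinning down these constants is the real work.
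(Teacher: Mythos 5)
Your proof of ii) is essentially the paper's own argument: starting from Lemma~\ref{toto}~ii), writing the complex free field in polar coordinates so that the modulus carries $\hat{\mathcal L}_1$, expanding the exponential of the edge interaction, integrating out the phases to enforce the Eulerian constraint, and then identifying coefficients of $\prod Z^{k}$ against an infinitesimal indicator; the delicate constant-tracking you flag ($r_x\,dr_x=\tfrac12 d\rho_x$, the $\tfrac12 C_{x,y}$ weights over ordered pairs, the $1/\det G$ normalisation) is exactly what the paper's computation does. The only genuine divergence is in i): the paper does not reprove it, but simply cites \cite{lejanito}, whereas you supply a self-contained derivation by combining Lemma~\ref{toto}~i) at $\chi=0$, $\alpha=1$ with the MacMahon master theorem to expand $\det(I-P^{Z})^{-1}$ over Eulerian networks with multiplicity $\prod_x k_x!/\prod_{x,y}k_{x,y}!$. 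That argument is correct (the row-sum/column-sum matching in the master theorem is precisely the Eulerian condition, and the coefficient extraction is legitimate since the generating identity holds for $Z$ in a neighbourhood of $0$), and it is in the spirit of one of the two proofs in \cite{lejanito}; your alternative sketch via $-\log\det(I-W)=\sum_n \tfrac1n\mathrm{tr}(W^n)$ and a BEST-type count of closed-walk decompositions corresponds to the other. What your route buys is that the whole proposition becomes self-contained given Lemma~\ref{toto}, at the cost of invoking the master theorem; the paper instead reuses the free-field computation of i) so that ii) is a one-line extension of it.
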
 

\begin{proof} 
i) was proved in two different ways in \cite{lejanito}.
For ii), the first proof of i) can be extended as follows:  Let $\mathfrak{N}$ be the additive semigroup of networks and $\mathfrak{E}$ be the additive semigroup of Eulerian networks.
 From the previous lemma, we get\\

$E(\prod_{x\neq y} Z_{x,y}^{N^{(1)}_{x,y}}e^{-\sum_x \chi_x \hat{\mathcal{L}}_{1}^x} ) =E(e^{\sum_{x\neq y}(\frac{1}{2} C_{x,y} (Z_{x,y}-1)\varphi_x \bar{\varphi}_y)}e^{-\frac{1}{2}\sum_x \chi_x \varphi_x\bar{\varphi}_x })
\\
= \frac{1}{(2\pi)^d \det(G)} \int e^{-\frac{1}{2}(\sum_{x}(\lambda_x+\chi_x) \varphi_x\bar{\varphi}_x -\sum_{(x,y)\in K\times K} C_{x,y} Z_{x,y}\varphi_x \bar{\varphi}_y)} \prod_x  \frac{1}{2i}d \varphi_x\wedge d\bar{\varphi}_x \\
\\
=\frac{1}{(2\pi)^d \det(G)} \int_0^\infty \int_0^{2\pi} e^{-\frac{1}{2}(\sum_{x}(\lambda_x +\chi_x)r_x^2 -\sum_{x,y} C_{x,y} Z_{x,y}r_x r_y e^{i(\theta_x-\theta_y)})}\prod_x r_x d r_xd{\theta}_x\\
\\
=\frac{1}{(\det(G)} \int_0^\infty \int_0^{2\pi} e^{-\frac{1}{2}\sum_{x}(\lambda_x+\chi_x) r_x^2 }\sum_{n\in \mathfrak{N}}\prod_{x,y\in K} \frac{1}{n_{x,y}!}(C_{x,y}(\frac{1}{2}Z_{x,y}r_x r_y e^{i(\theta_x-\theta_y)})^{n_{x,y}} \prod_x \frac{r_x}{2\pi} d r_xd{\theta}_x$.\\

Integrating in the $\theta_x$ variables and using the definition of Eulerian networks, it equals
\\
$\frac{1}{\det(G)} \int_0^\infty  e^{-\frac{1}{2}\sum_{x}(\lambda_x +\chi_x)r_x^2 }\sum_{n\in \mathfrak{E}}\prod_{(x,y)\in K\times K} \frac{1}{n_{x,y}!}(\frac{1}{2}C_{x,y} Z_{x,y}r_x r_y )^{n_{x,y}} \prod_x r_x d r_x$.\\ 
It follows that for any functional $F$ of a field on $X$,
$E(\prod_{x\neq y} Z_{x,y}^{N^{(1)}_{x,y}}F(\hat{\mathcal{L}}_{1} )) \\
 =\frac{1}{\det(G)} \int_0^\infty  e^{-\frac{1}{2}\sum_{x}(\lambda_x)r_x^2 }\sum_{n\in \mathfrak{E}}\prod_{(x,y)\in K\times K} \frac{1}{n_{x,y}!}(\frac{1}{2}C_{x,y} Z_{x,y}r_x r_y )^{n_{x,y}} F(r^2) \prod_x r_x d r_x$.\\ 
We conclude the proof of the proposition by letting $F$ be an infinitesimal indicator function and by identifying the coefficients of $\prod_{x,y} Z_{x,y}^{k_{x,y}}$.
\end{proof}

Note that given $N^{(1)}=k$, all $\frac{\prod_x {k_x}!}{\prod_{x,y} k_{x,y}!} $ discrete loops configurations are equally likely. 
Note also that from this proposition follows the Markov property extending the reflection positivity property proved in chapter 9 of \cite{stfl}: If $X$ is the disjoint union of  $X_1$ and $X_2$ and we condition $N_{x,y}$ and $N_{y,x}$ to take certain values for $x\in X_1$ and  $y\in X_2$, the restrictions of $N$ to $X_1 \times X_1$ and $X_2 \times X_2$ are independent.

For $\alpha=\frac{1}{2}$, denote $N^{(\frac{1}{2})}_{\{  \} } $ the field $N^{(\frac{1}{2})}_{\{ x,y \}} =N^{(\frac{1}{2})}_{x,y}+N^{(\frac{1}{2})}_{y,x}.$ Note that $\sum_y  N^{(\frac{1}{2})}_{\{ x,y \}}$ is always even.
We call even networks the sets of numbers attached to non oriented edges such that $k_x=\dfrac{1}{2}\sum_y k_{\{ x,y \}}$ is an integer.
Similarly, we have the following   
\begin{proposition} 
 i) For any even network $k$,$$  P(N^{(\frac{1}{2})}_{\{  \} } =k)=\sqrt{\det(I-P)}\frac{\prod_x {2 k_x}!}{\prod_x  2^{k_x} {k_x}! \prod_{x,y} k_{\{ x,y \}}!} \prod_{x,y} P_{x,y}^{k_{x,y}}.$$
 ii)  For any even network $k$, and any nonnegative function $\rho$ on $X$  $$P(N^{(\frac{1}{2})}_{\{  \} } =k\:,\: \hat{\mathcal{L}}_{\frac{1}{2}}\in (\rho, \rho+d\rho )=\frac{1}{\sqrt{\det(G)}} \prod_{x,y} \frac{(\sqrt{\rho_x} C_{x,y}\sqrt{\rho_y})^{k_{x,y}}}{ k_{x,y}!}\prod_{x}\frac{1}{\sqrt{2\pi \rho}}e^{-\frac{1}{2}\lambda_x\rho_x }d\rho_x.$$

\end{proposition}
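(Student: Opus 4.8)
The plan is to imitate the proof just given for the case $\alpha=1$, replacing the complex free field by the real free field and part i) of Lemma \ref{toto} by part iii); the only structural change is that, the real field carrying no phase, the mechanism selecting admissible networks is different, and this is what produces ``even'' networks rather than Eulerian ones. First I would restrict attention to symmetric matrices $Z$, setting $Z_{x,y}=Z_{y,x}=:Z_{\{x,y\}}$, so that $\prod_{x\neq y}Z_{x,y}^{N^{(1/2)}_{x,y}}=\prod_{\{x,y\}}Z_{\{x,y\}}^{N^{(1/2)}_{\{x,y\}}}$ and the left-hand side of Lemma \ref{toto} iii) becomes the joint generating function of the unoriented field $N^{(1/2)}_{\{\}}$ together with $\hat{\mathcal{L}}_{1/2}$. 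Writing the Gaussian expectation on the right-hand side as an integral against the density of $\varphi^{\mathbb{R}}$, whose inverse covariance is $G^{-1}=\mathrm{diag}(\lambda)-C$, the quadratic form $-\tfrac12(\varphi^{\mathbb R})^{T}G^{-1}\varphi^{\mathbb R}$ combines with the $Z$-dependent term exactly as in the $\alpha=1$ computation: the off-diagonal part becomes $\tfrac12\sum_{x\neq y}C_{x,y}Z_{\{x,y\}}\varphi_x\varphi_y$ and the diagonal part becomes $-\tfrac12\sum_x(\lambda_x+\chi_x)\varphi_x^2$.

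Next I would expand the edge-interaction exponential as a sum over networks $n$ indexed by unoriented edges, $\prod_{\{x,y\}}\tfrac1{n_{\{x,y\}}!}(C_{x,y}Z_{\{x,y\}}\varphi_x\varphi_y)^{n_{\{x,y\}}}$, so that the integrand is a product over vertices of terms $e^{-\frac12(\lambda_x+\chi_x)\varphi_x^2}\varphi_x^{m_x}$ with $m_x=\sum_y n_{\{x,y\}}$. Here is the one genuinely new point. In the complex case the integrals over the phases $\theta_x$ forced the Eulerian constraint; here the field is real and there are no phases, and instead the one-dimensional integral $\int_{\mathbb R}e^{-\frac12 a\varphi^2}\varphi^{m}\,d\varphi$ vanishes unless $m$ is even and otherwise equals $\sqrt{2\pi}\,a^{-(m+1)/2}\,\frac{m!}{2^{m/2}(m/2)!}$. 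This simultaneously enforces that every vertex degree $m_x=2k_x$ be even, i.e. that $n$ be an even network, and generates the factor $\prod_x\frac{(2k_x)!}{2^{k_x}k_x!}$.

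Finally I would collect the constants. Setting $\chi=0$ and extracting the coefficient of $\prod_{\{x,y\}}Z_{\{x,y\}}^{k_{\{x,y\}}}$ yields $P(N^{(1/2)}_{\{\}}=k)$ up to the prefactor $1/\sqrt{\det G}$ times the vertex factors $\lambda_x^{-(k_x+1/2)}$ and the edge factors $C_{x,y}^{k_{\{x,y\}}}$; using $\det G^{-1}=\det(I-P)\prod_x\lambda_x$ converts the prefactor into $\sqrt{\det(I-P)}$, and the surviving powers of $\lambda$ recombine with the $C_{x,y}$ to rebuild the transition probabilities, giving i). For ii) I would not extract a coefficient but keep a general functional $F$, perform the change of variables $\rho_x\leftrightarrow\varphi_x^2$ dictated by the isomorphism $\hat{\mathcal L}_{1/2}=\tfrac12(\varphi^{\mathbb R})^2$ (each $\rho_x$ having the two preimages $\pm\sqrt{\cdot}$, with $d\varphi_x$ producing the $1/\sqrt{2\pi\rho_x}$ weights), and read off the joint density exactly as in the $\alpha=1$ case.

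The main obstacle is not conceptual but the bookkeeping of constants, which is more delicate than for $\alpha=1$: one must correctly track the half-integer powers of $\lambda_x$ and the Gaussian factor $\sqrt{2\pi}$ per vertex, the double-factorial weights $\frac{(2k_x)!}{2^{k_x}k_x!}$ coming from the even moments, and the systematic passage between oriented and unoriented edges (so that $\prod P_{x,y}^{k_{x,y}}$ is to be understood through the symmetric combination $\sqrt{P_{x,y}P_{y,x}}=C_{x,y}/\sqrt{\lambda_x\lambda_y}$). Getting all these factors to assemble into the stated prefactor $\frac{\prod_x(2k_x)!}{\prod_x 2^{k_x}k_x!\,\prod_{x,y}k_{\{x,y\}}!}$ is the only real work.
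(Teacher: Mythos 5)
Your proposal is correct and follows essentially the same route as the paper: apply Lemma \ref{toto} iii) with a symmetric matrix, write the right-hand side as a real Gaussian integral, expand the exponential of the edge interaction, and use the moments of the normal distribution — whose vanishing for odd powers enforces the even-network constraint and whose even values $\frac{(2k_x)!}{2^{k_x}k_x!}$ produce the combinatorial prefactor — before identifying coefficients (and, for ii), keeping a general functional and changing variables). The paper states this only in compressed form ("expand the exponential of the double sum and the expression of the moments of the normal distribution"), so your write-up is simply a fleshed-out version of the same argument.
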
 
\begin{proof} 
Let  $\mathfrak{F}$ be the additive semigroup of even networks. 
 To prove i) note that on one hand, for any symmetric matrix $S$
$$ E(\prod_{\{ x,y \}} S_{x,y}^{N^{(\frac{1}{2})}_{\{ x,y \}}})=\sum_{k\in \mathfrak{F} } P(N^{(\frac{1}{2})}_{\{  \} } =k)\prod_{\{ x,y \}} S_{x,y}^{k_{\{ x,y \}}}.$$ 
On the other hand, from the previous lemma: \\
$ E(\prod_{\{ x,y \}} S_{x,y}^{N^{(\frac{1}{2})}_{\{ x,y \}}})=E(e^{\sum_{x,y}(\frac{1}{2} C_{x,y} (S_{x,y}-1)\varphi^{\mathbb{R}}_x \varphi^{\mathbb{R}}_y)} )\\
\\
= \frac{1}{(2\pi)^{d/2} \sqrt{\det(G)}} \int e^{-\frac{1}{2}(\sum_{x}\lambda_x {(\varphi^{\mathbb{R}}_x)}^2 -\sum_{(x,y)\in K\times K} C_{x,y} S_{x,y}\varphi^{\mathbb{R}}_x\varphi^{\mathbb{R}}_y)}\prod_x  d \varphi^{\mathbb{R}}_x\\$

and we conclude as before by expanding the exponential of the double sum and the expression of the moments of the normal distribution.
Then ii) follows in the same way as in the proof of the previous proposition.
\end{proof} \\
We can deduce from ii) that the symetrized $N^{(\frac{1}{2})}$ field conditionned by the vertex occupation field is, as it was observed by Werner
in \cite{wernersemiprob}, a random current model. 

A Markov property also holds (see \cite{wernersemiprob} and also \cite{camialis}  in the context of non backtracking loops).

\section{Fields and coverings}
\subsection{Decompositions}
 Given a covering $\widetilde{\mathcal{G}}=(\widetilde{X},\widetilde{E})$, the killing measure and the conductances are naturally defined on it so that they are invariant under the action of the monodromy group and they project on $C$ and $\kappa$.  
The Dirichlet form and the associated Markov process can be naturally lifted to any non ramified covering. 
 
 The Green functions of the covering denoted $\widetilde{G}$ is related to $G$ by the following identity:
$$G(p(u),p(v))=\sum_{m\in M}\widetilde{G}(u,m\cdot v).$$\\Let $\mathbb{I}$ be the identity element in $M$. If we fix a section of $p$,  the previous identity can be rewritten as follows:$$G(x,y)=\sum_{m\in M}\widetilde{G}((x,\mathbb{I}),(y,m).$$
From that, we deduce that if f $\vert M \vert$ is finite, the free field  of the covering denoted $\widetilde{\varphi}$ is related to $\varphi$ by the following identity:\\
$$\varphi\circ p(u) \,{\buildrel d \over =}\, \dfrac{1}{\sqrt{\vert M \vert}}\:\sum_{m\in M}\widetilde{\varphi}(m\cdot u).$$\\
 Define $\mathcal{L}^0_{\alpha}=\{l\in \mathcal{L_{\alpha}}, C_{U}(l)=\mathbb{I}\}$ Let $\{ \mathcal{L}^{0,m}_{\alpha},\: m \in M \} $ be independent copies of $\mathcal{L}^0_{\alpha}$.\\
Choose a fundamental domain $F$ in $\widetilde{X}$ to lift $\mathcal{L}^{0,\mathbb{I}}_{\alpha}$ (the base points being lifted to $F$) and lift each $\mathcal{L}^{0,m}_{\alpha}$ to $m(F)$. The union of these lifts is identical to $\widetilde{\mathcal{L}_{\alpha}}$ in distribution.\\

Given a $M$-assignment $U$, and an irreducible unitary representation $\pi$ of $M$, we define a tranfer matrix $P^{U,\pi}$on $X\times\mathbb{C}^{dim(\pi)}$: $$P^{U,\pi}_{(x,i),(y,j)}=P_{x,y}\pi(U_{x,y})_{ij}.$$ Let $G^{U,\pi}$ denote the  associated Green function, and $\varphi^{U,\pi}$ the associated vector free field.

From the decomposition of the regular representation into irreducible representations, we get the following decomposition of  $\widetilde{G}$.

$$\widetilde{G}((x,m),(y,n))=\sum_{\pi}\sum_{i,j=1}^{dim(\pi)} G^{U,\pi}_{(x,i),(y,j)}\pi(n^{-1}m)_{ij}.$$

 Let $\varphi^{U,\pi,j}$ be $dim(\pi)$ independent copies of $\varphi^{U,\pi}$. Define them jointly in $\pi$ so that they are independent Then we can deduce from the former decomposition of $\widetilde{G}$ that in distribution:  
 $$\widetilde{\varphi}((\cdot,m)) \,{\buildrel d \over =}\, \sum_{\pi}\sum_{i,j=1}^{dim(\pi)}\pi(m)_{ij}\varphi^{U,\pi,j}_i(\cdot).$$
 
 

 If we perform a change of gauge,  we see that the fields $\varphi^{U,\pi,j}$ are transformed consistently, therefore we see them as representatives in a particular gauge of  intrinsic fields taking values in the sections of  vector bundles.\\


\subsection{Random homology}
We now recall a result of \cite{lejanito} and provide a simple example. The additive semigroup of Eulerian networks is naturally mapped on the first homology group $H_1(\mathcal{G},\mathbb{Z})$ of the graph, which is defined as the quotient of the fundamental group by the subgroup of commutators. It is an Abelian group with $n=\vert E \vert -\vert X \vert +1$ generators. The homology class of the network $k$ is determined by the antisymmetric part $\widecheck{k}$ of the matrix $k$.\\The distribution of the induced random homology $\widecheck{N}^{(\alpha)}$ can be computed as a Fourier integral on the Jacobian torus of the graph $ Jac(\mathcal{G})=H^1(\mathcal{G},\mathbb{R})/ H^1(\mathcal{G},\mathbb{Z})$. \\Here, following \cite{kosu}  we denote by $H^1(\mathcal{G},\mathbb{R})$ the space of harmonic one-forms, which in our context is the space of one-forms $\omega^{x,y}=-\omega^{y,x}$ such that $\sum_{y}C_{x,y}\omega^{x,y}=0$ for all $x\in X$ and by $H^1(\mathcal{G},\mathbb{Z})$ the space of harmonic one-forms $\omega$ such that for all discrete loops (or equivalently for all non backtracking discrete loops) $\gamma$ the holonomy $\omega(\gamma)$ is an integer.\\
Precisely, if we equip $H^1(\mathcal{G},\mathbb{R})$ with the scalar product defined by the set of conductances $C$: $$\Vert \omega\Vert^2=\sum_{x,y}C_{x,y}(\omega^{x,y})^2,$$ let $d\omega$ be the associated Lebesgue measure, for all $j \in H_1(\mathcal{G},\mathbb{Z})$, and denote by $G^{(2\pi \omega)}$ the Green function attached to $P^{e^{2\pi i \omega}}$, we have:\\
 \begin{proposition} 
$$ P(\widecheck{N}^{(\alpha)}=j)=\frac{1}{\vert Jac(\mathcal{G})\vert}\int_{Jac(\mathcal{G})}\left[\frac{\det(G^{(2\pi i\omega)})}{\det(G)}\right]^{\alpha}e^{-2\pi i\langle j,\omega \rangle}d\omega.$$
 \end{proposition}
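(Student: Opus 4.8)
The plan is to compute the characteristic function of the random homology $\widecheck{N}^{(\alpha)}$ on the Jacobian torus and then invert the Fourier transform. The homology class lives in $H_1(\mathcal{G},\mathbb{Z})\cong\mathbb{Z}^n$, and the natural characters on this lattice are indexed by points $\omega\in H^1(\mathcal{G},\mathbb{R})/H^1(\mathcal{G},\mathbb{Z})=Jac(\mathcal{G})$, acting by $j\mapsto e^{2\pi i\langle j,\omega\rangle}$. So the first step is to establish the Fourier-inversion skeleton: since $\widecheck{N}^{(\alpha)}$ takes values in the finitely-generated free abelian group $H_1(\mathcal{G},\mathbb{Z})$, we have
\begin{equation*}
P(\widecheck{N}^{(\alpha)}=j)=\frac{1}{\vert Jac(\mathcal{G})\vert}\int_{Jac(\mathcal{G})}E\!\left(e^{2\pi i\langle \widecheck{N}^{(\alpha)},\omega\rangle}\right)e^{-2\pi i\langle j,\omega\rangle}\,d\omega,
\end{equation*}
which reduces everything to evaluating the expectation $E(e^{2\pi i\langle \widecheck{N}^{(\alpha)},\omega\rangle})$.

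**Next I would** interpret this expectation through Lemma \ref{toto}(i). The pairing $\langle\widecheck{N}^{(\alpha)},\omega\rangle$ is the total holonomy of the harmonic one-form $\omega$ accumulated by the loop ensemble: writing it out, $\langle\widecheck{N}^{(\alpha)},\omega\rangle=\sum_{x,y}N^{(\alpha)}_{x,y}\,\omega^{x,y}$, because the antisymmetric part $\widecheck{k}$ is precisely what pairs with an antisymmetric $\omega$, and the homology class of $k$ is determined by $\widecheck{k}$. Therefore $e^{2\pi i\langle\widecheck{N}^{(\alpha)},\omega\rangle}=\prod_{x\neq y}(e^{2\pi i\omega^{x,y}})^{N^{(\alpha)}_{x,y}}$, which is exactly the generating functional in Lemma \ref{toto}(i) with the choice $Z_{x,y}=e^{2\pi i\omega^{x,y}}$ and $\chi\equiv 0$. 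Applying the lemma with this $Z$ gives
\begin{equation*}
E\!\left(e^{2\pi i\langle \widecheck{N}^{(\alpha)},\omega\rangle}\right)=\left[\frac{\det(I-P^{e^{2\pi i\omega}})}{\det(I-P)}\right]^{-\alpha},
\end{equation*}
where $P^{e^{2\pi i\omega}}_{x,y}=P_{x,y}e^{2\pi i\omega^{x,y}}$ is the twisted transfer matrix. The final step is to recognize that $G^{(2\pi i\omega)}=(I-P^{e^{2\pi i\omega}})^{-1}$ up to the usual normalization, so that $\det(I-P^{e^{2\pi i\omega}})^{-1}$ is proportional to $\det(G^{(2\pi i\omega)})$, and likewise $\det(I-P)^{-1}\propto\det(G)$, the proportionality constants (powers of $\det(\mathrm{diag}(\lambda))$) being identical and cancelling in the ratio. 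Substituting into the inversion formula yields the stated result.

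**The main obstacle** I expect lies in the bookkeeping that justifies using a harmonic representative $\omega$ of each character rather than an arbitrary one-form, and in verifying that the pairing and the twisting are mutually consistent. A priori the holonomy of a loop depends on the full one-form, but only its cohomology class matters for the homology pairing; one must check that restricting the Fourier variable to the harmonic subspace $H^1(\mathcal{G},\mathbb{R})$ (equipped with the conductance inner product) correctly parametrizes $Jac(\mathcal{G})$ and that the twisted determinant $\det(I-P^{e^{2\pi i\omega}})$ is genuinely a function on the torus, i.e. invariant under $\omega\mapsto\omega+\eta$ for integral harmonic $\eta$. This invariance is precisely the gauge-invariance discussed earlier: adding an integral form shifts the edge holonomies by integer multiples of $2\pi i$, leaving every $e^{2\pi i\omega^{x,y}}$ unchanged, so $P^{e^{2\pi i\omega}}$ and hence its determinant descend to $Jac(\mathcal{G})$. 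Once this well-definedness is settled, the remaining identifications are routine, and the orthonormality of the characters $e^{2\pi i\langle j,\omega\rangle}$ over $Jac(\mathcal{G})$ closes the argument.
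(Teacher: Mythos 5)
Your proposal is correct and follows essentially the same route as the paper: Fourier inversion over the Jacobian torus followed by the identification of the characteristic function $E(e^{2\pi i\langle \widecheck{N}^{(\alpha)},\omega\rangle})$ with the determinant ratio $\left[\det(G^{(2\pi i\omega)})/\det(G)\right]^{\alpha}$. The only cosmetic difference is that the paper writes the intermediate step via the Poissonian exponential formula $e^{\alpha\sum_l\mu(l)(e^{2\pi i\langle\widecheck{N}(l),\omega\rangle}-1)}$, whereas you invoke Lemma \ref{toto}(i) with $Z_{x,y}=e^{2\pi i\omega^{x,y}}$ (legitimately Hermitian since $\omega$ is antisymmetric), which encodes the same identity.
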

 \begin{proof}  
 Indeed, by Fourier transform
 $$P(\widecheck{N}^{(\alpha)}=j)=\frac{1}{\vert Jac(\mathcal{G}\vert}\int_{Jac(\mathcal{G})}E(e^{2\pi i\langle \widecheck{N}^{(\alpha)}-j,\omega \rangle} ) d\omega $$
$$=\frac{1}{\vert Jac(\mathcal{G}\vert}\int_{Jac(\mathcal{G})}e^{\alpha \sum_l \mu(l) (e^{2\pi i\langle \widecheck{N}(l),\omega \rangle} -1)} e^{-2\pi i\langle  j,\omega \rangle}  d\omega$$ \\
 $$=\frac{1}{\vert Jac(\mathcal{G})\vert}\int_{Jac(\mathcal{G})}\left[\frac{\det(G^{(2\pi i\omega)})}{\det(G)}\right]^{\alpha}e^{-2\pi i\langle j,\omega \rangle}d\omega.$$
\end{proof}

For $\alpha=1$, this expression can be written equivalently as $$\frac{1}{\vert \mathrm{Jac}(\mathcal{G})\vert}\int_{Jac(\mathcal{G})} E(e^{\sum_{x\neq y}(\frac{1}{2} C_{x,y} (e^{2\pi i \omega_{x,y}}-1)\varphi_x \bar{\varphi}_y)} ) e^{-2\pi i\langle j,\omega \rangle}d\omega$$
$$=\frac{1}{\vert \mathrm{Jac}(\mathcal{G})\vert}\int_{\mathrm{Jac}(\mathcal{G})} E(e^{\frac{1}{2} (\mathcal{E}-\mathcal{E}^{(2\pi i \omega)})(\varphi,\bar{\varphi}) )}e^{-2 \pi  i \langle j ,\omega \rangle} d\omega$$ where $\mathcal{E}^{(2\pi i \omega)}$ denotes the positive energy form defined by :$$\mathcal{E}^{(2\pi i \omega)}(f,g)=\frac{1}{2}\sum_{x,y}C_{x,y}(f(x)-e^{2\pi i \omega_{x,y}}f(y))(\bar{g}(x)-e^{-2\pi i \omega_{x,y}}\bar{g}(y))+\sum_x \kappa_x f^{2}(x).$$ This expression can also be written as
$$\frac{1}{\vert Jac(\mathcal{G})\vert } \frac{1}{\det(G)}\int_{Jac(\mathcal{G})} E(e^{-\frac{1}{2} \mathcal{E}^{(2\pi i \omega)}(\varphi,\bar{\varphi}) }e^{-2 \pi  i \langle j ,\omega \rangle} d\omega \dfrac{d\varphi \wedge d\bar{\varphi}}{2i}.$$\\
There is a similar expression when $\alpha$ is an integer $d$, with $d$ independent copies of the free field $\varphi$.\\

\textbf{An example:} Consider the case of the discrete circle with $N$ vertices, conductances equal to 1 and killing rate $\kappa$.\\
The homology group is $\mathbb{Z}$. $\widecheck{N}_{i,i+1}(l)$ is constant in $i$ for any loop $l$ and  $\widecheck{N}$ can therefore be viewed an an integer. Harmonic form are also constant and the Jacobian torus is $\mathbb{R} / (\mathbb{Z}/N)$. $P$ and $P^{e^{2\pi i\omega}}$ are circulant matrices and therefore, their determinants can be computed.\\ If we set $u_{\pm}=\frac{1}{2}(-1\pm\sqrt{1-\dfrac{4}{(2+\kappa)^2})},$
$$\det(I-P)=u_+^N+u_-^N+\dfrac{2(-1)^{N+1}}{(2+\kappa)^N}$$and $$\det(I-P^{e^{2\pi i\omega}})=u_+^N+u_-^N+\dfrac{(-1)^{N+1}2\cos(2\pi N\omega)}{(2+\kappa)^N}.$$ 
Hence for any integer $j$, we get 
$$P(\widecheck{N}^{(\alpha)}=j)=N \int_0^{1/N}\left[\frac{u_+^N+u_-^N+\dfrac{2(-1)^{N+1}}{(2+\kappa)^N}}{u_+^N+u_-^N+\dfrac{(-1)^{N+1}2\cos(2\pi N\omega)}{(2+\kappa)^N}}\right]^{\alpha}e^{-2\pi N j \omega }d\omega.$$
 Hence we have, with $C_N(\chi)=(-1)^{N}(2+\kappa)^N(u_+^N+u_-^N)-2,$

$$P(\widecheck{N}^{(\alpha)}=j)=\int_0^{1}\left[\frac{C_N(\kappa)}{C_N(\kappa)+2(1-\cos(2\pi\omega))}\right]^{\alpha}e^{-2\pi  j \omega }d\omega.$$\\
Note that $C_N$ is a polynomial of degree $N$ with leading order coefficient equal to $1$.\\

Letting $N$ increase to infinity with $\kappa=\frac{k}{N^2}, \:k>0$, we get that for the Brownian loop ensemble with killing rate $k$,

$$P(\widecheck{N}^{(\alpha)}=j)= \int_0^{1}\left[\frac{\cosh(\sqrt{k})-1}{\cosh(\sqrt{k})-\cos(2\pi\omega))}\right]^{\alpha}e^{-2\pi  j \omega }d\omega.$$
Factorizing the first term in the integrand, it appears that this is the distribution of the difference of two independent variables with the same negative binomial distribution of parameters $(\alpha,e^{-\sqrt{k}})$.\\

\subsection{Non Abelian holonomies}
We now consider the case of a finite, non Abelian monodromy group.\\
Given any group $G$ and $k$ of its conjugacy classes $C_1, C_2,...,C_k$, we denote $\mathcal{N}_{G}(C_1, C_2,...,C_k)$ the number of $k$-uples $(\gamma_1,\gamma_2,...\gamma_k),\:\gamma_i\in C_i$ such that $\gamma_1\gamma_2...\gamma_k=I$.
\\
Note that it is invariant by permutation of the $C_i$ and that given another class $C_0$, $\mathcal{N}_{G}(C_1, C_2,...,C_k,C_0^{-1})$ is  the number of $k$-uples whose product is in $C_0$.
 
Given a covering defined by a $M$-assignment $U$, denote by $C_U(\mathcal{L}_{\alpha})$ the set of monodromy classes defined by the discrete loops of  $\mathcal{L}_{\alpha}$ and, for any representation $\pi$ of $M$, by $\chi_{\pi}(C_U(\mathcal{L}_{\alpha}))$ the product $\prod_{l\in \mathcal{L}_{\alpha} }\chi_{\pi}(C_U(l))$.\\

The following result can be obtained as a direct generalization of lemma \ref{toto} .

 \begin{lemma} 
\begin{enumerate}

\item[i)]  With $[Z.U]_{x,y}=Z_{x,y}U_{x,y}$ we have: $$E(\prod_{x\neq y} Z_{x,y}^{N^{(\alpha)}_{x,y}}\chi_{\pi}(C_U(\mathcal{L^{(\alpha)}}))) =\left[\frac{\det(I-P^{Z.U,\pi})}{I-P}\right]^{-\alpha}.$$
\item[ii)] Moreover, for $\alpha=1$,  $$E(\prod_{x\neq y} Z_{x,y}^{N_{x,y}}\chi_{\pi}(C_U(\mathcal{L})))=E(e^{\sum_{x\neq y}\langle\frac{1}{2} C_{x,y} (Z_{x,y}U_{x,y}-I)
\varphi_x^{U,\pi} ,\:\bar{\varphi}_y^{U,\pi} \rangle}.$$
\end{enumerate}
\end{lemma}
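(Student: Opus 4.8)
The plan is to reduce both assertions to the exponential formula for Poisson processes, exactly as for Lemma~\ref{toto}, the only new ingredient being that the loop weight is now matrix-valued through the representation $\pi$. For any multiplicative loop functional $F$ one has, by the Poissonian character of $\mathcal{L}_\alpha$,
$$E\Big(\prod_{l\in\mathcal{L}_\alpha}F(l)\Big)=\exp\Big(\alpha\sum_l\mu(l)\big(F(l)-1\big)\Big),$$
and I would apply this to $F(l)=\prod_{x\neq y}Z_{x,y}^{N_{x,y}(l)}\,\chi_\pi(C_U(l))$. The whole point is that, although the holonomy of a loop is a genuinely non-commutative object, the quantity retained, $\chi_\pi(C_U(l))=\mathrm{tr}\,\pi(U_{e_1}\cdots U_{e_p})$, is the trace of the ordered product of the $\pi(U_{x,y})$ along the edges of $l$; being a trace it is cyclic, hence invariant under the shift $\theta$ and under conjugation, so it is well defined on loops and depends only on the class $C_U(l)$, consistently with the definition given above.

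For i), I would compute $\sum_l\mu(l)F(l)$ by unfolding the loop measure into based loops. Writing $\mu$ as $\sum_{p\ge1}\frac1p$ times the sum over based discrete loops of length $p$ (the holding-time factors producing the continuous-time measure), the contribution of length-$p$ loops is
$$\sum_{x_0,\dots,x_{p-1}}\prod_{i}P_{x_i,x_{i+1}}Z_{x_i,x_{i+1}}\,\mathrm{tr}\big(\pi(U_{x_0,x_1})\cdots\pi(U_{x_{p-1},x_0})\big)=\mathrm{tr}\big((P^{Z.U,\pi})^{p}\big),$$
since the block transfer matrix $P^{Z.U,\pi}_{(x,i),(y,j)}=P_{x,y}Z_{x,y}\pi(U_{x,y})_{ij}$ was designed precisely so that the $\mathbb{C}^{\dim\pi}$-indices contract along the loop into the trace of the ordered holonomy. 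Summing $\frac1p\,\mathrm{tr}((P^{Z.U,\pi})^p)$ over $p$ gives $-\log\det(I-P^{Z.U,\pi})$, while the subtracted constant $\sum_l\mu(l)$ is the total mass $-\log\det(I-P)$. Multiplying by $\alpha$ and exponentiating yields $\big[\det(I-P^{Z.U,\pi})/\det(I-P)\big]^{-\alpha}$, which is i).

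For ii), with $\alpha=1$, I would follow the passage from i) to ii) in Lemma~\ref{toto}, replacing the scalar Gaussian field by the vector free field $\varphi^{U,\pi}$. The determinant ratio is rewritten as a ratio of Gaussian normalizations, and the quadratic functional $\sum_{x\neq y}\langle\frac12 C_{x,y}(Z_{x,y}U_{x,y}-I)\varphi^{U,\pi}_x,\bar\varphi^{U,\pi}_y\rangle$ is exactly the increment that turns the inverse covariance of the base field into the twisted energy form carrying the weight $Z_{x,y}\pi(U_{x,y})$ on the edge $(x,y)$; here the subtracted $I$ plays the same role as the $-1$ in $(Z_{x,y}-1)$ of the scalar statement. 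Expanding the exponential of the quadratic form and matching with the coefficients obtained in i), as in the proofs of the Propositions above, then identifies the two sides; unitarity of $\pi$, i.e. $\pi(U_{y,x})=\pi(U_{x,y})^{*}$, guarantees that the perturbed form is Hermitian so that the integral converges and the normalizations produce honest determinants.

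The step I expect to be the main obstacle is the first one: making rigorous the identity between the $\mu$-integral of the character functional and $\sum_p\frac1p\,\mathrm{tr}((P^{Z.U,\pi})^p)$. One must check simultaneously that the ordered, non-commutative holonomy is correctly linearized by the block matrix, that the cyclicity of the trace matches exactly the shift-invariance and the multiplicity weights $\frac1p$ built into $\mu$, and that nothing is lost in passing from based loops to loops and from discrete to continuous time. Once this bookkeeping is in place, the remainder is a formal repetition of Lemma~\ref{toto} and of the Gaussian computations already carried out.
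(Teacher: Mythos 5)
Your part i) is essentially the proof the paper has in mind: the paper offers no argument beyond the remark that the lemma ``can be obtained as a direct generalization of Lemma~\ref{toto}'', and your computation --- Campbell's formula for the Poisson ensemble, unfolding $\mu$ into based loops with the $\frac1p$ multiplicity, contracting the $\mathbb{C}^{\dim\pi}$ indices along the loop so that the character of the ordered holonomy becomes $\mathrm{tr}\big((P^{Z.U,\pi})^p\big)$, and resumming into $-\log\det(I-P^{Z.U,\pi})$ --- is exactly that generalization, correctly carried out. The cyclicity-of-the-trace point you single out as the main obstacle is indeed the only genuinely new ingredient, and you handle it correctly.

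Part ii) is where you are too quick. You assert that the increment $\frac12 C_{x,y}(Z_{x,y}U_{x,y}-I)$ ``exactly'' turns the inverse covariance of the base field into the twisted energy form, ``the subtracted $I$ playing the same role as the $-1$'' in the scalar case. It does not: the energy form of $\varphi^{U,\pi}$ already carries the off-diagonal term $-C_{x,y}\pi(U_{x,y})$, so the increment that converts $\mathcal{E}^{U,\pi}$ into $\mathcal{E}^{Z.U,\pi}$ is $\frac12 C_{x,y}(Z_{x,y}-1)\pi(U_{x,y})$, not $\frac12 C_{x,y}(Z_{x,y}\pi(U_{x,y})-I)$; and with that corrected increment the Gaussian normalization produces the ratio $\det(I-P^{U,\pi})/\det(I-P^{Z.U,\pi})$, which differs from the answer in i) by the constant $\det(I-P^{U,\pi})/\det(I-P)=E\big(\chi_\pi(C_U(\mathcal{L}))\big)^{-1}$ (nontrivial as soon as $\pi(U)$ is not the identity). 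Conversely, if you keep the increment $(Z_{x,y}U_{x,y}-I)$ you must take as base field $\dim\pi$ independent copies of the untwisted field $\varphi$, and then the normalization is $\det(I-P)^{\dim\pi}$ rather than $\det(I-P)$. So the two sides of ii) as written cannot agree in general; the identity holds only after one of these renormalizations. This mismatch is arguably already present in the paper's own statement, but a proof should detect it rather than declare the bookkeeping exact: a sanity check at $Z\equiv 1$ (where the left side is $E(\chi_\pi(C_U(\mathcal{L})))=\det(I-P)/\det(I-P^{U,\pi})$ while your right side is $1$) would have exposed it immediately.
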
 

 For any conjugacy class $C_0$ of $M$, set $H_U^{(\alpha)}(C_0)=\frac{\mathcal{N}_M(C_U(\mathcal{L}_{\alpha}),C_0^{-1})}{\prod_{l \in \mathcal{L}_{\alpha}}  \vert C_U(l) \vert} $ . $H_U^{(\alpha)}$ is a probability on the set of conjugacy classes of $M$. It represents the proportion of product of monodromies of loops of $\mathcal{L}_{\alpha}$ which are in this conjugacy class $C_0$.
We can compute the mean value of $H_U^{(\alpha)}$ using Frobenius formula (see the appendix in \cite{LanZvo}). We get that
$$H_U^{(\alpha)}(C_0)=\sum_{\pi}\dfrac{dim(\pi)^2\chi_{\pi}(C_U(\mathcal{L}_{\alpha}))\overline{\chi_{\pi}(C_0)}}{\vert M \vert}.$$ 
Note that if $M=\mathbb{Z}/n\mathbb{Z}$, irreducible representations are given by $\pi_k(m)=e^{2\pi km}, \:k=0,1,...n-1$ and this identity reduces to:
$$1_{C_U(\mathcal{L}_{\alpha})=m_0}=\frac{1}{n}\sum_k e^{2\pi k(C_U(\mathcal{L}_{\alpha})-m_0) },$$ with $C_U(\mathcal{L}_{\alpha})=\sum_{x,y}N^{\alpha}_{x,y}U_{x,y}.$\\
Coming back to the general case, we deduce that:
$$E(H_U^{(\alpha)}(C_0))=\sum_{\pi} \dfrac{dim(\pi)^2\overline{\chi_{\pi}(C_0)}}{\vert M \vert } E \left(  e^{\sum_{l}\alpha\mu(l)(\chi_{\pi}(C_U(l)-1)}\right) .$$ Equivalently:
$$E(H_U^{(\alpha)}(C_0))=\sum_{\pi}\dfrac{dim(\pi)^2\det(G^{U,\pi})^{\alpha}\overline{\chi_{\pi}(C_0)}}{\vert M \vert \det(G)^{\alpha}}.$$
In the case of $\mathbb{Z}/n\mathbb{Z}$, if $\omega$ is such that $U_{x,y}=\omega_{x,y}\:(n)$ for all edges $(x,y)$, we obtain that:
$$P(\sum_{x,y}N^{\alpha}_{x,y}U_{x,y}=m_0)=\frac{1}{n}\sum_k e^{-2\pi k m_0}\left[\frac{\det(G^{(2\pi i\omega)})}{\det(G)}\right]^{\alpha}.$$
 Moreover, for $\alpha=1$,  $$E(H_U^{(\alpha)}(C_0))=\sum_{\pi} \dfrac{dim(\pi)^2\overline{\chi_{\pi}(C_0)}}{\vert M \vert }E\left( e^{\sum_{x\neq y}\langle\frac{1}{2} C_{x,y} (Z_{x,y}U_{x,y}-I)
\varphi_x^{U,\pi} ,\:\bar{\varphi}_y^{U,\pi} \rangle}\right) .$$

Using tensor products of representations, we can get similar formulas for all moments of $H_U$.

\section{Convergence towards Yang-Mills measure}
Given a and a loop $l_0$  $\chi_{\pi}(C_U(l_0))$ defines a  gauge-invariant functional of the  $M$-assignment $U$. Recall that given any unitary representation $\pi$ of $M$: $$E(\prod_{x\neq y} \chi_{\pi}(C_U(\mathcal{L^{(\alpha)}}))) =\left[\frac{\det(I-P^{U,\pi})}{I-P}\right]^{-\alpha}=e^{\sum_{l}\alpha\mu(l)(\chi_{\pi}(C_U(l))-1)}.$$
Denote this quantity by $\Lambda_{\pi}^{\alpha}(U)$. We see that $\Lambda_{\pi}^{\alpha}$ defines a measure on $M-$coverings, as it is a measure on the set of $M$-assignments invariant under the action of the gauge group.

Assume now that all loops with non trivial homotopy contain $d$ edges or more. In a square or cubic lattice for example, we have $d=4$. For a general graph, let us still call these loops of minimal length plaquettes and denote by $\mathcal{P}$ the set of plaquettes.

For any $c>0$, Yang Mills measures can be defined on $M$-coverings by the weights $\Lambda_{\pi,c}(U)=e^{-c\sum_{l\in \mathcal{P}}\mu(l)\chi_{\pi}(C_U(l)-1)}$ (see \cite{Seil}).

Let now $\varepsilon$ be a parameter converging to $0$. If we add $\frac{\lambda(1-\varepsilon)}{\varepsilon}$ to $\kappa$ so that $\lambda$ is divided by $\varepsilon$, and take $\alpha=c\varepsilon^{-d}$, we see that:
\begin{proposition}  As $\varepsilon\longrightarrow 0$, the weights $\Lambda_{\pi}^{\alpha}(U)$ converge to $\Lambda_{\pi,c}(U)$.
\end{proposition}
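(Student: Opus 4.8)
The plan is to work from the determinant expression for the weights rather than from the loop sum directly, since the former makes the interchange of limit and summation transparent. Recall that $\Lambda_\pi^\alpha(U)=\left[\det(I-P^{U,\pi})/\det(I-P)\right]^{-\alpha}$. The prescribed modification of the killing measure, adding $\lambda(1-\varepsilon)/\varepsilon$ to $\kappa$, replaces $\lambda$ by $\lambda/\varepsilon$ and hence, since $P_{x,y}=C_{x,y}/\lambda_y$, replaces the transition matrix $P$ by $\varepsilon P$ and $P^{U,\pi}$ by $\varepsilon P^{U,\pi}$. With $\alpha=c\varepsilon^{-d}$ this gives $\Lambda_\pi^\alpha(U)=\left[\det(I-\varepsilon P^{U,\pi})/\det(I-\varepsilon P)\right]^{-c\varepsilon^{-d}}$, and the whole problem becomes the small-$\varepsilon$ asymptotics of this expression.

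First I would expand the logarithm of the determinants as a trace series, valid for $\varepsilon$ small enough that the spectral radius of $\varepsilon P$ is below $1$ (automatic on a finite graph). Using $\mathrm{tr}((P^{U,\pi})^n)=\sum_{|\xi|=n}(\prod P)\chi_\pi(U_\xi)$ and $\mathrm{tr}(P^n)=\sum_{|\xi|=n}\prod P$, the difference of the two trace series reorganizes, after passing from based loops to loops via the factor $1/n$, into the loop sum, so that
$$\log\Lambda_\pi^\alpha(U)=c\sum_{n\ge 1}\varepsilon^{\,n-d}\sum_{l:\,|l|=n}\mu(l)\bigl(\chi_\pi(C_U(l))-1\bigr),$$
where $|l|$ denotes the number of edges of $l$. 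The entire limit is now governed by the power $\varepsilon^{\,n-d}$ weighting the contribution of loops of length $n$.

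The decisive step is to read off the three regimes using the plaquette hypothesis. Loops with $|l|<d$ are, by assumption, null-homotopic, hence have trivial monodromy $C_U(l)=\mathbb{I}$ and $\chi_\pi(C_U(l))=\dim(\pi)$; their contribution is therefore independent of $U$, even though the prefactor $\varepsilon^{\,n-d}$ diverges. Loops with $|l|=d$ carry prefactor $1$; among them the null-homotopic ones again contribute a $U$-independent constant, while the genuinely non-contractible ones are exactly the plaquettes $\mathcal{P}$ and produce the term $c\sum_{l\in\mathcal{P}}\mu(l)(\chi_\pi(C_U(l))-1)$. Loops with $|l|>d$ carry prefactor $\varepsilon^{\,n-d}\to0$ and, the full series being absolutely convergent for small $\varepsilon$, their total contribution vanishes in the limit. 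Collecting these, $\log\Lambda_\pi^\alpha(U)$ equals $c\sum_{l\in\mathcal{P}}\mu(l)(\chi_\pi(C_U(l))-1)$ plus a quantity that does not depend on $U$, which is precisely $\log\Lambda_{\pi,c}(U)$ up to a $U$-independent additive constant.

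The main obstacle is exactly this $U$-independent constant, which diverges as $\varepsilon\to0$ because of the short-loop terms $n<d$: one cannot claim a naive numerical limit of the weights. This is resolved by recalling that $\Lambda_\pi^\alpha$ is regarded as a measure on the finite set of $M$-coverings, i.e. on gauge-orbits of assignments; multiplying all weights by a common $U$-independent factor does not change this measure. Normalizing (or passing to ratios of weights at two assignments) cancels the divergent constant and yields genuine convergence to the Yang--Mills weight $\Lambda_{\pi,c}$. A secondary point to check carefully is the scaling $\mu_\varepsilon(l)=\varepsilon^{|l|}\mu(l)$ of the loop measure: it holds for the discrete loop measure because that measure is a product of $|l|$ transition probabilities divided by a multiplicity, and the continuous-time one-point loops, the only loops whose measure is not captured this way, have trivial monodromy and so only feed the same $U$-independent constant.
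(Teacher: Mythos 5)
Your argument is correct and follows essentially the same route as the paper's one\--line proof: expand $\log\Lambda_{\pi}^{\alpha}(U)=\alpha\sum_{l}\mu_{\varepsilon}(l)\bigl(\chi_{\pi}(C_U(l))-1\bigr)$, use $\mu_{\varepsilon}(l)=\varepsilon^{|l|}\mu(l)$ and $\alpha=c\varepsilon^{-d}$ to split by loop length, note that loops of length $<d$ are null\--homotopic and contribute nothing $U$\--dependent, that the non\--contractible length\--$d$ loops give exactly the plaquette action, and that loops of length $>d$ are $O(\varepsilon)$ by the geometric\--series bound which is all the paper records. The one point where you go beyond the paper is the $U$\--independent constant coming from the short loops, which indeed diverges when $\dim(\pi)>1$ since $\chi_{\pi}(\mathbb{I})-1=\dim(\pi)-1\neq 0$; your resolution --- that $\Lambda_{\pi}^{\alpha}$ is only meaningful as a measure on coverings, so a common factor is immaterial --- is the right reading of the statement, though you should say explicitly that literal pointwise convergence of the weights therefore holds only after this normalization (or automatically when $\dim(\pi)=1$, e.g.\ in the $U(1)$ case treated at the end of the section, where the constant vanishes identically).
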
 
Indeed if $\varepsilon$ is small enough, the contribution of the loops with length strictly larger than $d$ can be bounded by a geometric series whose sum is of order $C\varepsilon$, $C$ being some constant. 

This measure on coverings can be extended to the case of compact monodromy groups. The space of coverings can be replaced by the set of connections, i.e the quotient of the group of $M$-assignments by the action of the gauge group (which acts by conjugacy). 

If $M=U(1)$ , the set of connections  can be identified with the Jacobian torus $Jac(\mathcal{G})$. We can choose $\pi$ to be the identical representation $\iota$ and then, $$\Lambda_{\iota}^{\alpha}(\omega)= \left[\frac{\det(I-P^{e^{2\pi i\omega}})}{\det(I-P)}\right]^{-\alpha}=\left[\frac{\det(G^{(2\pi i\omega)})}{\det(G)}\right]^{\alpha}.$$
In the case of our elementary example on the circle, we find that :
$$\Lambda_{\iota}^{\alpha}(\omega)=\left[\frac{C_N(\kappa)}{C_N(\kappa)+2(1-\cos(2\pi\omega))}\right]^{\alpha}$$
and that
$$\Lambda_{\iota,c}(\omega)=\exp \left(-2c\dfrac{(1-\cos(2\pi\omega))}{(2+\kappa)^N}\right).$$
Note finally that we can study in parallel the random spanning tree on the covering, its projection to $\mathcal{G}$ and the associated fermionic fields (see \cite{stfl}). Any probability on coverings produces a coupling at the level of loops and trees and therefore at the level of Gaussian (i.e. bosonic) and fermionic free fields. We plan to study this in more detail in a forthcoming work.\\

\textbf{Acknowledgment:} Hearty thanks are due to the referee for his careful reading and useful suggestions.

\bigskip

\noindent

  D\'epartement de Math\'ematique. Universit\'e Paris-Sud.  Orsay, France.

\bigskip
   yves.lejan@math.u-psud.fr

\end{document}